\tikzstyle{block} = [rectangle, rounded corners, minimum width=2.5cm, minimum height=1cm, text width=3.5cm, text centered, draw=black]
\tikzstyle{arrow} = [thick,->,>=stealth]
\numberwithin{equation}{section}
\numberwithin{equation}{subsection}
\theoremstyle{plain}
\newtheorem{lemma}[equation]{Lemma}
\newtheorem{thm}[equation]{Theorem}
\newtheorem{prop}[equation]{Proposition}
\theoremstyle{definition}
\newtheorem{defn}[equation]{Definition}
\newtheorem{nota}[equation]{Notation}
\newtheorem{rem}[equation]{Remark}
\numberwithin{equation}{section}
\numberwithin{equation}{subsection}
\newcommand{ \lk }{ \mbox{lk} }
\newcommand{ \imm }{ \mbox{Imm} }
\newcommand{ \emb }{ \mbox{Emb} }
\def\C{\mathbb C}
\def\R{\mathbb R}
\def\Z{\mathbb Z}
\def\im{{\rm Im}}
\def\d{\mathrm{d}}
\title[Cross-caps, triple points and a linking invariant]{Cross-caps, triple points and a linking invariant for finitely determined germs}
\author[G. Pint\'er]{Gerg\H{o} Pint\'er}
\address{Department of Theoretical Physics, Institute of Physics, Budapest University of Technology and Economics, M\H{u}egyetem rkp. 3., H-1111 Budapest, Hungary}
\email{pinter.gergo@ttk.bme.hu}
\author[A. S\'andor]{Andr\'as S\'andor}
\address{
Alfr\'ed R\'enyi Institute of Mathematics, Re\'altanoda u. 13-15, Budapest, 1053, Hungary  \\
Central European University, Dept. of Mathematics, N\'ador u. 9., Budapest, 1051, Hungary}
\email{sandora@renyi.hu}
\keywords{hypersurface singularities, non-isolated singularities,
links of singularities, finite determinacy, immersion theory, stable immersions, cross caps, topological invariants}
\date{}
\begin{document}

\maketitle

\begin{abstract}
	It was recently proved that for finitely determined germs $ \Phi: ( \C^2, 0) \to ( \C^3, 0) $ the number $C(\Phi)$ of Whitney umbrella points and the number $T(\Phi)$ of triple values of a stable deformation are topological invariants. The proof uses the fact that the combination $C(\Phi)-3T(\Phi)$ is topological since it equals the linking invariant of the associated immersion $S^3 \looparrowright S^5$ introduced by Ekholm and Sz\H{u}cs. We provide a new, direct proof for this equality. We also clarify the relation between various definitions of the linking invariant.
\end{abstract}

\section{Introduction}

\subsection{} Let $ \Phi: ( \C^2, 0) \to ( \C^3, 0) $ be a finitely determined (also called $ \mathcal{A}$-finite) holomorphic germ. In this case $ \mathcal{A}$-finiteness means that $ \Phi $ is  a stable immersion off the origin \cite{Wall, mond-ballesteros}. For these germs the number  of the complex Whitney umbrella (cross cap) points $ C( \Phi)$ and the triple values $ T( \Phi) $ of a stable holomorphic deformation are well-defined analytic invariants \cite{Mond0,Mond2}.
Recently in \cite{dBM} J. Fern\'andez de Bobadilla, G. Pe\~nafort, and J. E. Sampaio proved that these invariants are topological, moreover they are determined by the embedded topological type of the image of $ \Phi $. One of the main ingredients of their proof is the formula 
\begin{equation}\label{eq:main}
L( \Phi|_{ \mathfrak{S}})=C( \Phi)-3T( \Phi) 
\end{equation}
from \cite{NP}, which expresses the naturally topological Ekholm--Sz\H{u}cs invariant (also called triple point invariant or linking invariant) $ L( \Phi|_{ \mathfrak{S}})$ of the associated stable immersion
$\Phi|_{\mathfrak{S}}: \mathfrak{S} \simeq S^3 \looparrowright S^5$ in terms of $ C$ and $T $.
However, the formula (\ref{eq:main}) is proved in \cite{NP} in a rather complicated way, by using two Smale invariant formulas. The main purpose of this article is to provide a new direct proof for this formula. 

The Ekholm--Sz\H ucs invariant $L(f) $ of a stable immersion $ f: S^3 \looparrowright \R^5$ measures the linking of the image with a copy of the double values, shifted slightly along a suitable chosen normal vector field. 
 In the literature different versions of the definition of $L$ can be found (see \cite{ekholm3, ekholm4, ESz, saeki}), whose relation is not completely clarified. We verify their equivalence, i.e. $L_1(f)=-L_2(f)$, based on their opposite behavior through regular homotopies.

 Although our proof of the main theorem \eqref{eq:main} is self-contained, an independent secondary goal of this article is to clarify the enigmatic relation between
several variants of the linking invariant $L$ and other related invariants, used in the study of generic $\mathcal{C}^{\infty}$ real maps and immersions. 

\subsection{Structure of the article} In the Preliminaries (Section \ref{se:pre}), we summarise the properties of finitely determined holomorphic germs we will use. We outline the definitions of  $C$ and $T$ and their invariance for analytic, $\mathcal{C}^{\infty}$ and topological left-right equivalence. We introduce the associated immersion and we describe the double point structure of $\Phi$.

In Section \ref{se:link}, we collect the different definitions of the Ekholm--Sz\H{u}cs invariant $L$ of stable immersions $S^3 \looparrowright S^5$ from the literature. We show that they agree up to sign and we clarify that sign. 
Then we define an invariant for finitely determined germs by applying $L$ to the associated immersions, and we prove its topological left-right invariance.

In Section \ref{s:prmain}, we provide a new, direct proof for the correspondence $L=C-3T$. We use local calculations near complex cross cap points and triple values.

Finally, Appendix \ref{se:elojel} is a brief summary of the applications of $L$ and another similar linking invariant in the study of generic real maps and immersion theory. We collect the most relevant results and clear up the context of this article, including the main steps of the original proof of \eqref{eq:main}. Then we compare the new local calculation for the complex cross cap points with an older one in \cite{NP}, and clarify its consequences for the Ekholm--Sz\H{u}cs Smale invariant formula.

\subsection*{Acknowledgements} We are very grateful to 
L\'aszl\'o Feh\'er, 
Andr\'as Sz\H{u}cs,
Tam\'as Terpai,
Andr\'as N\'emethi, 
J\'ozsef Bodn\'ar and 
David Mond
 for
several very helpful conversations regarding this topic. We also thank the reviewer for their insightful suggestions.

GP thanks his physicist colleagues Andr\'as P\'alyi, Gy\"orgy Frank, D\'aniel Varjas and J\'anos Asb\'oth for the new inspiration to the singularity theory research.

\section{Preliminaries}\label{se:pre}

\subsection{Invariants of a stabilization}\label{ss:invstab} A holomorphic germ $ \Phi: ( \C^2, 0) \to ( \C^3, 0) $  is \emph{finitely $ \mathcal{A}$-determined} (briefly, \emph{finitely determined}), if there is an integer $k$ such that the $k$-th Taylor polynomial of $ \Phi $ determines it up to left-right equivalence, or equivalently, the $ \mathcal{A}$-codimension of $ \Phi$ is finite.
By Mather--Gaffney criterion \cite{Wall, mond-ballesteros}, $ \Phi $ is finitely determined if and only if its restriction $ \Phi|_{ \C^2 \setminus \{ 0 \}} $ is stable. This means that a sufficiently small representative of $\Phi|_{ \C^2 \setminus \{ 0 \}}$ has only (1) regular simple points and (2) double values with transverse intersection of the regular branches.

The only possible multigerms of a stabilization (stable deformation) of a holomorphic germ $ \Phi: ( \C^2, 0) \to ( \C^3, 0) $ are (1) regular simple points, (2) double values with transverse intersection of the regular branches, (3) triple values with regular intersection of the regular branches and (4) simple \emph{Whitney umbrella (cross cap)} points. The Whitney umbrellas and the triple values are isolated points, up to analytic $\mathcal{A}$-equivalence they have local normal forms
\begin{equation}\label{eq:wh} 
    \mbox{Whitney umbrella (cross cap): } \ (s,t) \mapsto (s^2, st, t) 
    \end{equation}
    \begin{equation}\label{eq:tr}
    \mbox{Triple value:} 
    \left\{ \begin{array}{ccc} 
     (s_1,t_1)    & \mapsto & (0,s_1,t_1)  \\
     (s_2,t_2)    & \mapsto & (t_2,0,s_2)  \\
     (s_3,t_3)    & \mapsto & (s_3,t_3,0)  \\
    \end{array} \right.
\end{equation}

The numbers $ C( \Phi ) $ of the cross caps and $ T( \Phi ) $ of the triple values are independent of the stabilization, they are analytic invariants of the finitely determined germs $ \Phi $.
Both invariants were  introduced by Mond \cite{Mond0, Mond2}, they can be defined in algebraic way as well, without referring to a stabilization, as follows. 

Let $ C_{alg}( \Phi ) $ be the codimension of the \emph{ramification ideal}, which is the ideal in the local ring $ \mathcal{O}_{( \C^2, 0)}$ generated by the determinants of the $2 \times 2$ minors of the Jacobian matrix of $ \Phi : ( \C^2, 0) \to ( \C^3, 0) $. $ T_{alg}( \Phi ) $ is  the codimension of the second Fitting ideal associated with $ \Phi $ in $ \mathcal{O}_{( \C^3, 0)}$ \cite{mondfitting}. If $ \Phi $ is finitely determined, then both $ C_{alg}( \Phi ) $ and $ T_{alg}( \Phi ) $ are finite, and any stabilization of $ \Phi $ has $ C( \Phi )= C_{alg}( \Phi )$ number of cross caps and $ T( \Phi ) =T_{alg}( \Phi )$ number of triple values.   The invariants $ T$ and $ C$ appear in several different contexts, see for example \cite{Mondvan, mararmulti, nunodouble, slicing, mond-ballesteros, gtezis}.

The analytic invariance of $C$ and $T$ means the following. Let $ \Phi_1$ and $ \Phi_2$ be finitely determined germs, analytic $\mathcal{A}$-equivalent to each other. That is, there exist germs of biholomorphisms $ \phi: (\C^2, 0) \to ( \C^2, 0)$ and $ \psi: (\C^3, 0) \to ( \C^3, 0)$ such that 
\begin{equation}\label{eq:lr} 
\Phi_2=\psi \circ \Phi_1 \circ \phi \end{equation}
holds, i.e. the diagram below commutes.

\begin{equation} \label{diag:lr}
\begin{tikzcd}
( \C^2, 0) \arrow[r, "\Phi_1"] 
& ( \C^3, 0) \arrow[d, "\psi"] \\
( \C^2, 0) \arrow[r, "\Phi_2"]
\arrow[u, "\phi"]
&  ( \C^3, 0)
\end{tikzcd}
\end{equation}

Then \begin{equation}\label{eq:invct}C(\Phi_1)=C(\Phi_2) \mbox{ and } T(\Phi_1)=T(\Phi_2).
\end{equation}

In \cite{NP} it is proved that $C$ and $T$ are $ \mathcal{C}^{\infty}$-invariants as well. That is, (\ref{eq:invct}) holds also for $ \mathcal{C}^{\infty}$ left-right equivalent germs, i.e. for two holomorphic finitely determined germs for which (\ref{diag:lr}) holds with some germs of $\mathcal{C}^{\infty} $-diffeomorphisms  $ \phi: (\R^4, 0) \to ( \R^4, 0)$ and $ \psi: (\R^6, 0) \to ( \R^6, 0)$. (Here, $\C^n$ and $ \R^{2n}$ are naturally identified.)

The topological invariance of $C$ and $T$ would mean that (\ref{eq:invct}) holds also for topologically left-right equivalent germs, that is when we only require $\phi$ and $ \psi$ to be germs of homeomorphisms. This invariance was an open question for  a long time. In \cite{NP} A. N\'emethi and the first author proved that the linear combination $C-3T$ is a topological invariant. This follows from $L=C-3T$ (formula (\ref{eq:main})) which expresses a topological invariant (the Ekholm--Sz\H ucs invariant) of the associated immersion, see the next sections.  In this article, we present a new direct proof of formula $L=C-3T$. 
(We also prove the topological invariance of the Ekholm--Sz\H ucs invariant, see Proposition \ref{prop:topinv}. This fact is very natural and has been implicitly used previously, but according to the authors' knowledge, it has not been published yet.)

In \cite{dBM} J. Fern\'andez de Bobadilla, G. Pe\~nafort, and J. E. Sampaio proved that $C$ and $T$ are topological invariants, moreover they are determined by the embedded topological type of the image of $ \Phi $. A key ingredient of their proof is the topological invariance of $C-3T$, which follows from the formula $L=C-3T$. 

% Masfajta leiras: Smale cikk 2.2., 2.3.

\subsection{The associated immersion} Let $ \Phi: (\C^2, 0) \to (\C^3,0) $ be  a finitely determined germ. Such a germ, on the level of links of the spacegerms
$(\C^2,0)$ and $(\C^3,0)$, provides
a stable immersion $ \Phi|_{S^3}: S^3 \looparrowright S^5 $ as follows. The preimage $ \mathfrak{S}:= \Phi^{-1} ( S^5_{ \epsilon })$ of the $5$-sphere $ S^5_{ \epsilon } \subset \C^3 $ around the origin, with a sufficiently small radius $ \epsilon $,  is diffeomorphic to $S^3$. The restriction 
$ \Phi|_{\mathfrak{S}}: \mathfrak{S} \looparrowright S^5_{ \epsilon} $ is the immersion associated with $ \Phi $. The regular homotopy class of $\Phi|_{\mathfrak{S}} $ is independent of all the choices. The immersions obtained by different choices are regular homotopic to each other through stable immersions. See \cite[2.1.]{NP} or \cite[Subsection 1.1.2.]{gtezis}.

\subsection{The image and the double points} Write $(X,0)$ for $({\rm im}(\Phi),0)$ and
let $ f: ( \C^3, 0) \to ( \C, 0) $ be the reduced equation of $(X,0)$.
 Note that $ (X, 0) $ is a {\it non-isolated} hypersurface singularity, except when $ \Phi $ is a regular map (see \cite{NP}).
 We denote by $ (\Sigma,0) = ( \partial_{x_1} f, \partial_{x_2} f, \partial_{x_3} f)^{-1} (0) \subset (\C^3,0) $
 the {\it reduced} singular locus of  $(X,0)$
  -- that is the closure of the set of double values of $\Phi$.
 Also, we denote by $(D,0)$ the {\it reduced}
 double point curve $ \Phi^{-1} (\Sigma ) \subset (\C^2,0) $. The reduced equation of $D$ is
$ d: (\C^2, 0) \to (\C, 0) $.
 (In fact, the finite determinacy of the germ $ \Phi$
 is equivalent with the fact that the double point curve $D$ is reduced;  see e.g. \cite{nunodouble}.)
 
 Let $ \Upsilon \subset S^5_\epsilon$ be the link of $\Sigma$.
It is exactly the  set of double values  of $  \Phi |_{ \mathfrak{S}} $.
Let $ \gamma = \Phi^{-1}(\Upsilon) \subset \mathfrak{S}^3 $ denote the set of double points of
$  \Phi |_{ \mathfrak{S}} $, that is,
$ \gamma \subset  \mathfrak {S}  $ is the link of $D$. All link components are considered with  their natural orientations.

\begin{equation}
     \begin{array}{ccc}
( \C^2, 0) & \to & ( \C^3, 0) \\
\cup &          & \cup \\
(D, 0) &  \to      & ( \Sigma, 0)  \\
 \cup &          & \cup \\
\gamma=D \cap \mathfrak{S}^3 &  \to      & \Upsilon= \Sigma \cap S_\epsilon^5            
\end{array}
\end{equation}

\begin{figure}[h]

\centering
\includegraphics[width=0.9\textwidth]{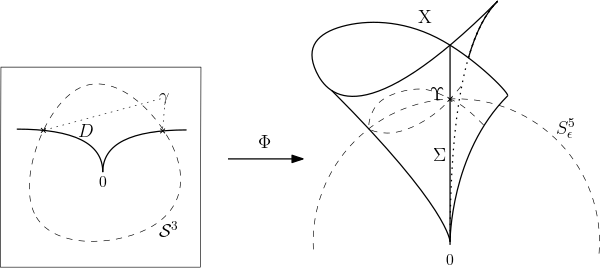}
\caption{Notations of the various parts of the space germs.}
\label{fig:map}
\end{figure}

\section{The Ekholm--Sz\H{u}cs linking invariant}\label{se:link}

\subsection{The Ekholm--Sz\H ucs invariant of stable immersions} \label{ss:eszinv} The invariant $L(f)$ of a stable immersion $ f: S^3 \looparrowright \R^5 $ measures the linking of a shifted copy of the double values with the whole image of $f$. Different versions of the definition can be found in the literature, for references see below.  In this paragraph, we review these definitions and prove their equivalence via their behavior along regular homotopies. We present the whole argument in the simplest case, for immersions $S^3 \looparrowright \R^5$, although originally they were introduced for different levels of generality (for other manifolds, higher dimensions) in \cite{ekholm3, ekholm4, ESz, saeki}. This discussion is an extended version of the summery in \cite[2.2.2.]{gtezis}.

A stable immersion $ f: S^3 \looparrowright \R^5 $ has only simple values and double values with transverse intersection of the two branches. Let 
$ \gamma \subset S^3 $ be the double point locus of $f$,
that is
$ \gamma = \{ p \in S^3 \ | \ \exists p' \in S^3: \ p \neq p' \mbox{ and } f(p)=f(p') \} $.
The locus $ \gamma $ is a closed $1$-manifold, i.e. a link in $S^3$ with possibly more components.
The map $ f|_{\gamma}: \gamma \to f( \gamma ) $ is a $2$-fold covering.
$ \gamma $ is endowed with an involution $ \iota: \gamma \to \gamma $ such that $\iota(p) \neq p$ and $ f( p )= f ( \iota (p) ) $ hold for all $ p \in \gamma $.

The first definition of $ L(f)$ is from \cite[6.2.]{ekholm3}. Let $v$ be a vector field along $\gamma$ tangent to $S^3$ and nowhere tangent to $\gamma$, i.e. $v$ represents a section of the normal bundle $TS^3|_{\gamma}/T\gamma$ of $\gamma \subset S^3$.  We also require that $[ \tilde{\gamma} ]  $ is $0$ in $H_1 ( S^3 \setminus \gamma , \Z) $, where $ \tilde{ \gamma } \subset S^3$ is the result of pushing $ \gamma $ slightly along $v$. Such a vector field $v$ is unique up to homotopy, and for instance each of the two vectors of a Seifert framing provides such a vector field.
If $v$ is such a vector field, then the linking number $ \lk_{S^3}( \gamma, \tilde{ \gamma} )$ equals to $ 0 $, but the reverse is not true, since $\lk_{S^3}( \gamma, \tilde{ \gamma} )$ is the sum of the components of $[ \tilde{\gamma} ]   \in H_1 ( S^3 \setminus \gamma , \Z) $.
(All the linking numbers appearing are considered with respect to the natural orientation of the curves and submanifolds involved.)
Let $ q = f(p) = f ( \iota (p)) $ be a double value of $f$. Then $ w(q) = df_p (v(p)) + df_{ \iota(p)} (v (\iota (p) ) $ defines a vector field $w$ along $ f( \gamma ) $ that is nowhere tangent to the branches of $f$. In this sense $w$ is a normal vector field of $f$ along $f(\gamma)$.
Let $ \widetilde{f(\gamma)} \subset \R^5 $ be the result of pushing $ f( \gamma) $ slightly along $w$, then $ \widetilde{f(\gamma)} $ and $ f( S^3 ) $ are disjoint. The first invariant is the linking number

\begin{equation}\label{eq:L1} 
L_1 (f) := \lk_{ \R^5 } ( \widetilde{f(\gamma)}, f( S^3) )
\end{equation}

(or equivalently, $L_1 (f)= [ \widetilde{f(\gamma)} ] \in 
H_1(\R^5 \setminus f(S^3), \Z) \cong \Z $).
Note that Ekholm used an other notation: in \cite[2.2., 6.2.]{ekholm3} our $ L_1 (f) $ is denoted by $ \lk (f) $, and $ L(f) $ is defined as $ \lfloor \lk (f) /3 \rfloor $.

The second definition is \cite[Definition 11.]{ESz}, \cite[Definition 2.2.]{saeki}. It works only with further assumptions, see Remark \ref{re:fuas} below. The normal bundle $ \nu (f) $ of $f$ is trivial, since the oriented rank--$2$ vector bundles over $S^3 $ are classified by $ \pi_2 (SO(2)) =0$. Any two trivializations are homotopic, since their difference represents an element in $ \pi_3 (SO(2)) =0$. Let $ (v_1, v_2) $ be the homotopically unique normal framing of $f$, and at a double value $ q = f(p) = f ( \iota (p)) $ define 
$ u(q) = v_1(p) + v_1 (\iota (p))  $. $u$ is a normal vector field along $f( \gamma)$, and let $ \overline{f(\gamma)} \subset \R^5 $ be the result of pushing $ f( \gamma) $ slightly along $u$. Then $ \overline{f(\gamma)} $ and $ f( S^3 ) $ are disjoint. The invariant is the linking number (or equivalently, the homology class) 

\begin{equation}\label{eq:L2} 
L_2 (f) := \lk_{ \R^5 } ( \overline{f(\gamma)}, f( S^3) ) = [ \overline{f(\gamma)} ] \in 
H_1(\R^5 \setminus f(S^3), \Z) \cong \Z \mbox{.}
\end{equation}

Note that the framing $(v_1, v_2)$ can be replaced by an arbitrary nonzero normal vector field $v$ of $f$, since it can be extended to a framing whose first component is $v$.

\begin{rem}\label{re:fuas}
	Without further assumptions it is possible that $u(q)$ is tangent to one of the branches of $f$, hence it can happen that $ \overline{f(\gamma)} \cap f( S^3 ) \neq \emptyset $. To avoid this problem one has to choose a unit normal vector field $v$ or has to assume that the intersection of the branches is orthogonal, which can be reached by a regular homotopy through stable immersions. In this paper all the calculations uses $L_1$ and not $L_2$.
	\end{rem}

The third definition is in \cite[Definition 4.]{ESz}, see also \cite[4.5., 4.6.]{ekholm4}. Let $v$ be a nonzero normal vector field of $f$ along $ \gamma $, that is, a nowhere zero section of $ \nu (f) |_{ \gamma} $. Let $ [v] $ be the homology class represented by $v$ in $ H_1 ( E_0( \nu (f)), \Z ) \cong \Z $, where $ E_0( \nu (f)) $ denotes the total space of the bundle of nonzero normal vectors of $f$. Let $u_v (q) = v(p) + v( \iota (p) ) $ be the value of the vector field $u_v$ along $ f( \gamma ) $ at the point $ q= f(p) = f ( \iota (p) )$. Let $ \overline{f(\gamma)}^{(v)} $ be the result of pushing $ f( \gamma) $ slightly along $u_v$, then $ \overline{f(\gamma)}^{(v)} $ and $ f( S^3 ) $ are disjoint. The invariant is
\begin{equation}\label{eq:Lv} 
L_v (f) := \lk_{ \R^5 } ( \overline{f(\gamma)}^{(v)}, f( S^3) ) -[v]= [ \overline{f(\gamma)}^{(v)} ] -[v] \mbox{,}
\end{equation}
where $ [ \overline{f(\gamma)}^{(v)} ] \in 
H_1(\R^5 \setminus f(S^3), \Z) \cong \Z $.

By \cite[Lemma 4.15.]{ekholm4} $ L_v (f) $ is well-defined, that is, $ L_v (f) $ does not depend on the choice of the normal field $v$. Moreover, if $ v $ is the restriction of a (global) normal vector field of $f $ to $ \gamma $, then $[v]=0$. Indeed, the restriction of the normal field of $f$ to a Seifert surface $H$ of $ \gamma $ results a surface $ \overline{H} \subset E_0( \nu (f)) $, 
whose boundary is the image of $ v : \gamma \to E_0( \nu (f))$. Hence $ L_v (f) = L_2 (f) $.

The invariants $L_1 $, $L_2 $ are equal to each other with opposite sign. This follows from the fact that they behave in an inverse way along regular homotopies, i.e. they change with the same number with opposite sign when a stable regular homotopy steps through first order instabilities: immersions with (1) one triple value (``triple point moves'') or (2) a self-tangency (``self-tangency moves''). For definitions we refer  to \cite{ekholm3, ekholm4}. The proof  of Proposition~\ref{pr:Leq} is a result of a discussion with Tam\'as Terpai and Andr\'{a}s Sz\H{u}cs.

\begin{prop}\label{pr:Leq}

	(a) $ L_1 (f) $ and $ L_2 (f) $ are invariants of stable immersions. They change by $ \pm 3 $ under triple point moves and do not change under self tangency moves. In other words: if $f$ and $g$ are regular homotopic stable immersions, $h: S^3 \times [0, 1] \to \R^5 $ is a stable regular homotopy between them, then $\pm (L_i (f) -L_i (g))$ is equal to three times the algebraic number of the triple values of the map $ H: S^3 \times [0, 1] \to \R^5 \times [0, 1]$, $H(x, t)=(h(x, t), t)$.
	
	(b) In the above setup $L_1 (f) -L_1 (g)=-(L_2 (f) -L_2 (g))$.
	
	(c) The three definitions are equivalent:
	\[
	L_1 (f) = -L_2 (f) = -L_v (f).
    \]
\end{prop}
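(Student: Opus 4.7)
The plan is to establish parts (a) and (b) by local analysis at the two codimension-one instabilities (self-tangencies and triple points) that a generic one-parameter family of immersions must cross, and then to derive (c) from (b) by invoking the classification of immersions of $S^3$ in $\R^5$ up to regular homotopy.

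For (a), I would observe that both $L_1(f)$ and $L_2(f)$ are realised as classes in $H_1(\R^5 \setminus f(S^3)) \cong \Z$ represented by a pushed-off copy of $f(\gamma)$. Along a generic one-parameter family $h_t$ of immersions, $\gamma_t$ evolves smoothly away from finitely many critical moments, and between them the pushed-off curves move by an ambient isotopy in the complement of $f_t(S^3)$, so their homology classes are locally constant. At a self-tangency the double-point locus undergoes a local surgery in a small ball; I would check on the standard local model that both pushoffs still bound 2-chains in the complement, so neither invariant changes. At a triple point three arcs of $\gamma$ appear or disappear inside a small ball; tracking orientations in the local model shows that each arc contributes one intersection of the pushoff with $f(S^3)$, all with the same sign, giving the $\pm 3$ jump.

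For (b), I would run the same triple-point local model while following the two pushoffs simultaneously. The pushoff vector for $L_1$, built from a tangent-to-$S^3$ field $v$, lies after pushforward by $df$ in the span of the tangent planes to the two branches of $f$ meeting at a double value, whereas the pushoff vector for $L_2$, built from a normal-to-$f$ field $v_1$, lies in the complementary normal subspace. In the local model the contributions of the three newly created arcs to the intersection count with $f(S^3)$ acquire opposite signs from these two complementary choices of pushoff direction, so $L_1$ and $L_2$ change by the same absolute amount but with opposite signs, which is exactly the assertion of (b).

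For (c), combining (a) and (b) shows that $L_1 + L_2$ is unchanged under arbitrary regular homotopies, since the $\pm 3$ contributions cancel at every triple point and both invariants are unaffected at self-tangencies. By Smale--Hirsch, the regular homotopy classes of immersions $S^3 \looparrowright \R^5$ form the group $\pi_3(V_3(\R^5)) \cong \Z$, so it suffices to verify $L_1 + L_2 = 0$ on one representative of each class. The trivial class is represented by the standard embedding, where $\gamma = \emptyset$ and both invariants vanish; nontrivial classes can be treated by taking explicit model immersions with controlled, symmetric double-point structure on which $L_1 + L_2 = 0$ is visible by direct computation. The identity $L_v = L_2$ has essentially been reduced in the text to the vanishing of the correction $[v]$ when $v$ is the restriction of a global normal framing of $f$. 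The principal obstacle is the sign bookkeeping in the local triple-point model underlying both (a) and (b): once consistent orientations are fixed for $\gamma$, the involution $\iota$, the two pushoff vectors, and $S^3$, verifying that the $L_1$- and $L_2$-contributions appear with opposite signs reduces to a finite linear-algebra computation in the local normal form.
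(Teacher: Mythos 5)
Your plan for parts (a) and (b) — tracking the two pushoffs through the local triple-point and self-tangency normal forms — is, in spirit, a reconstruction of what Ekholm does in \cite{ekholm3, ekholm4}; the paper simply cites those local arguments rather than re-deriving them, but this is a legitimate route and the sign comparison is exactly the point the paper extracts from Ekholm's two papers.

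There is a genuine gap in your part (c). You correctly reduce to showing that the regular-homotopy invariant $L_1 + L_2$ vanishes on every class in $\imm(S^3,\R^5)\cong\Z$, and you check the trivial class via the embedding. But then you face infinitely many other classes, and your proposal to handle them with ``explicit model immersions with controlled, symmetric double-point structure on which $L_1+L_2=0$ is visible by direct computation'' is not a proof: you would need to produce an immersion generating $\imm(S^3,\R^5)$ and actually compute both invariants on it, which is nontrivial and which you do not do. The paper closes this gap with one extra structural input that your argument does not use: $L_1$ and $L_2$ are both additive under connected sum (\cite[Lemma 5.2, Proposition 5.4]{ekholm4}, \cite[6.5]{ekholm3}). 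This makes $L_1+L_2$ a \emph{homomorphism} $\imm(S^3,\R^5)\to\Z$, and since it vanishes on the index-$24$ subgroup $\emb(S^3,\R^5)$ (because both invariants vanish on embeddings), it must vanish identically — a homomorphism $\Z\to\Z$ that kills a finite-index subgroup is zero. Without the additivity step, the reduction to a single computation at the embedding is not available, and your ``principal obstacle'' remains unresolved. Your treatment of $L_v=L_2$ via $[v]=0$ for a globally extendable normal field matches the text and is fine.
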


\begin{proof}
	Part (a) is proved for $L_1$ in \cite[Lemma 6.2.1.]{ekholm3} and for $L_2=L_v$ in \cite[Theorem 1.]{ekholm4}. 
	
	For part (b), we compare the change of $L_1$ and $L_2$ through a triple point move. In the proof of \cite[Lemma 6.2.1.]{ekholm3} Ekholm defines a local model of the triple point move where $L_1$ increases by $3$. On the other hand, in  the discussion preceding \cite[Definition 6.3]{ekholm4} he provides a convention to measure the change of $L_2$. If we check this convention on the previous local model, we obtain that $L_2$ decreases by $3$ through that triple point move. Hence $L_1$ and $L_2$ changes in opposite ways at each triple point move.
	
	Using part (a) and part (b), we prove part (c) as follows. 
	Since $ L_1 $ and $L_2 $ changes in opposite way along a regular homotopy, $L_1+L_2$ is a regular homotopy invariant. Moreover
	$ L_1$ and $L_2 $ are additive under connected sum, see \cite[Lemma 5.2., Proposition 5.4.]{ekholm4}, \cite[6.5.]{ekholm3}. It follows that $L_1 + L_2$ defines a homomorphism from $ \imm (S^3, \R^5 ) $ to $ \Z$. If $f: S^3 \hookrightarrow \R^5 $ is an embedding, then $ L_1 (f) = L_2 (f) = 0 $, hence $ L_1 + L_2 $ is $0$ on the $24$-index subgroup $ \emb (S^3, \R^5 )$ of $ \imm ( S^3, \R^5 ) \cong \Z$. It follows that $L_1 + L_2$ is $0$ for every stable immersion, hence $L_1 = - L_2$. 
\end{proof}

We fix the following convention.
\begin{nota}
$L(f):=L_1(f)$.
\end{nota}

In the continuation of this article, $L$ will be  studied  thoroughly, and a new direct proof of $L_2(f)=-L_1(f)$ will be provided.

\subsection{Ekholm--Sz\H{u}cs invariant for finitely determined germs}\label{ss:eszfindet} 

The definition of $L_1(f)$ and $L_2(f)$ of immersions  $f: S^3 \looparrowright \R^5$ cannot be applied directly for $\Phi|_{\mathfrak{S}}: \mathfrak{S} \looparrowright S^5$. In fact, the shifted copy  of $\upsilon \in S^5$ by a normal vector field is  a curve in $ \C^3=\R^6$, but not exatly in $S^5$. To solve this technical difficulty we recall one of the definitions of the linking number.

\begin{defn} \label{def:linking}
Let $N^n, M^m \subset S^k= \partial B^{k+1}$ be two closed oriented submanifolds with dimensions $n+m+1=k$. 
Choose any oriented homological membranes $\widetilde{M}, \widetilde{N} \subset B^{k+1} $ for them, that is, $\widetilde{M}$ and $ \widetilde{N}$
 are singular chains in $ B^{k+1} $ of dimensions $n+1$, respectively $m+1$, with coefficients in $\Z$,  whose boundaries are $\partial \widetilde{N} =N$, $\partial \widetilde{M}=M$. 
 Then the linking number  $ \lk_{S^k} (N, M)$ of $N$ and $M$ in $S^k$ is defined as the intersection number $\mbox{int}_{B^{k+1}} (\widetilde{M}, \widetilde{N})$ of $\widetilde{M}$ and $\widetilde{N} $ in $ B^{k+1} $.
\end{defn}

For the definition of $L_1(\Phi|_{ \mathfrak{S}})$ consider $\overline{\mbox{grad} (d)}$, the conjugate of the gradient vector field of $d$ defined on $D$. Its restriction to $\gamma \subset \mathfrak{S}$ is a representative of the homotopically unique Seifert framing of $\gamma$. Then the sum of the two copies of $d \Phi ( \overline{\mbox{grad} (d)})$ is a nonzero normal vector field along $\Sigma \setminus \{0 \}$, which extends to the origin with $0$. Let $\widetilde{ \Sigma}$ be a copy of $\Sigma $ shifted along this vector field. Define $\widetilde{\Upsilon}:= \widetilde{\Sigma} \cap S^5$ and $L_1( \Phi|_{\mathfrak{S}})=\lk_{S^5} (\widetilde{\Upsilon}, \Phi(\mathfrak{S}))$. The invariant $L_1( \Phi|_{\mathfrak{S}})$ is equal to the intersection number of any pair of membranes in $B^6$ with boundaries $ \widetilde{\Upsilon}$ and $ \Phi|_{\mathfrak{S}}$. Especially, $L_1( \Phi|_{\mathfrak{S}})$ is the intersection number of $\widetilde{\Sigma}$ and $X$. Unfortunately, however, they intersect each other only at the origin, which is a singular point of possibly both membranes, hence the intersection number cannot be calculated directly. Instead, we will repeat the whole procedure with the \emph{analytic stabilization} of $\Phi$, and that will lead to the formula $L_1( \Phi|_{\mathfrak{S}})=C(\Phi)-3T(\Phi)$. 

$L_2(\Phi|_{ \mathfrak{S}})$ can be defined in a similar way, by using $\overline{\partial_s \Phi \times \partial_t \Phi}$ as a representative of the homotopically unique global normal field of $\Phi|_{\mathfrak{S}}$. We can define the shifted copy $\widetilde{\Sigma}^{(2)}$ of $\Sigma$, and $\widetilde{\Upsilon}^{(2)}:= \widetilde{\Sigma}^{(2)} \cap S^5$. However, by Remark \ref{re:fuas}, we cannot guarantee that $\widetilde{\Upsilon}^{(2)}$ and $\Phi (\mathfrak{S})$ are disjoint. Although the formula $L_2( \Phi|_{\mathfrak{S}})=3T(\Phi)-C(\Phi)$ can be supported by local calculation, the precise proof in this way is technically complicated. On the other hand, $L_2$ can be computed directly for the Whitney umbrella to support that $L_1=-L_2$ holds, see Appendix~\ref{se:elojel}.

The topological invariance of $L(\Phi|_{\mathfrak{S}})=L_1(\Phi|_{\mathfrak{S}})$ is almost trivial, since the linking number is  a topological (homological) invariant. However, its proof has been nowhere explained in detail.

\begin{prop} \label{prop:topinv} $L(\Phi|_{\mathfrak{S}})$ is a topological invariant of $\Phi$. That is if $ \Phi_1$ and $ \Phi_2$ are finitely determined germs topologically $\mathcal{A}$-equivalent to each other (see Subsection \ref{ss:invstab}), then
\begin{equation}\label{eq:inv} L(\Phi_1|_{\mathfrak{S_1}})=L(\Phi_2|_{\mathfrak{S_2}}).
\end{equation}
\end{prop}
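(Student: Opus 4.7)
My plan is to exploit that $L(\Phi|_{\mathfrak{S}}) = L_1(\Phi|_{\mathfrak{S}})$ is by construction a linking number in $S^5$, hence by Definition~\ref{def:linking} an invariant of the ambient (orientation-preserving) homeomorphism class of the configuration $(S^5, \Phi(\mathfrak{S}) \sqcup \widetilde{\Upsilon})$. It then suffices to produce, from the topological $\mathcal{A}$-equivalence $\Phi_2 = \psi \circ \Phi_1 \circ \phi$, such a homeomorphism between the $\Phi_1$- and $\Phi_2$-configurations.

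First I would fix compatible Milnor representatives. Starting from a Milnor radius $\epsilon_2$ for $\Phi_2$, continuity of $\psi$ and $\phi^{-1}$ lets us shrink $\epsilon_1>0$ so that $\psi(\bar B^6_{\epsilon_1}) \subset B^6_{\epsilon_2}$, $\phi^{-1}(\bar B^4_{\epsilon_1}) \subset B^4_{\epsilon_2}$, and $\epsilon_1$ is itself a Milnor radius for $\Phi_1$. The commutative diagram (\ref{diag:lr}) then restricts to a homeomorphism of pairs $(B^6_{\epsilon_1}, \Phi_1(B^4_{\epsilon_1})) \to (\psi(B^6_{\epsilon_1}), \Phi_2(\phi^{-1}(B^4_{\epsilon_1})))$. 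Passing to boundaries, $\psi$ carries the $5$-sphere $\partial B^6_{\epsilon_1}$ to a topological $5$-sphere $\Sigma^5$ sitting inside $B^6_{\epsilon_2}$, and sends $\Phi_1(\mathfrak{S}_1) \mapsto \Phi_2(\phi^{-1}(\mathfrak{S}_1))$ and $\Upsilon_1 \mapsto \psi(\Upsilon_1)$. Because linking numbers are invariants of ambient homeomorphism type, this yields
\[
L(\Phi_1|_{\mathfrak{S}_1}) \;=\; \lk_{\Sigma^5}\bigl(\psi(\widetilde{\Upsilon}_1),\; \Phi_2(\phi^{-1}(\mathfrak{S}_1))\bigr).
\]
To match the right-hand side with $L(\Phi_2|_{\mathfrak{S}_2})$ I would invoke that the associated immersion of $\Phi_2$ is well-defined up to regular homotopy through stable immersions, so any two Milnor sphere choices yield the same linking invariant by Proposition~\ref{pr:Leq}(a); a small smoothing replacing the topological sphere $\phi^{-1}(\mathfrak{S}_1)$ by a nearby smooth Milnor sphere for $\Phi_2$ takes care of the transition from the topological to the smooth setting.

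The hard part will be showing that $\psi(\widetilde{\Upsilon}_1)$ represents the same class in $H_1(\Sigma^5 \setminus \Phi_2(\phi^{-1}(\mathfrak{S}_1))) \cong \Z$ as the canonical Seifert-framed push-off built from $\Phi_2$'s smooth data. Since $\psi$ is merely a homeomorphism and cannot transport $\overline{\mathrm{grad}\,d}$ or $d\Phi$ directly, the resolution is to give a purely homological characterization of the class $[\widetilde{\Upsilon}]$. Concretely, the Seifert framing of $\gamma \subset \mathfrak{S}$ is the unique framing class whose push-off is null-homologous in $\mathfrak{S}\setminus\gamma$---a purely topological condition---and the summing-of-branches operation producing $\widetilde{\Upsilon}$ from this Seifert data can be re-encoded using the local product structure of the transverse, locally flat normal crossings of $\Phi(\mathfrak{S})$ along $\Upsilon$, which is again preserved by homeomorphism. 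Every ingredient of the resulting recipe---namely $\gamma$, the involution $\iota$, the pairing of branches at points of $\Upsilon$, the null-homology condition for the Seifert push-off, and the normal-bundle sum---is then transported by $\psi$ together with its source-side partner $\phi^{-1}$, yielding $[\psi(\widetilde{\Upsilon}_1)] = [\widetilde{\Upsilon}_2]$ and hence the proposition.
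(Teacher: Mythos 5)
Your plan is essentially the paper's: treat $L$ as a linking/homological quantity invariant under ambient homeomorphism and argue that the shifted curve $\widetilde{\Upsilon}$ is determined up to isotopy in the complement of $\Phi(\mathfrak{S})$ by data (the double-point link $\gamma$, the involution, and the Seifert framing) that is transported by the topological $\mathcal{A}$-equivalence, the Seifert framing being a purely homological notion. Where you gesture at a ``re-encoding of the branch-summing via the local product structure of the crossings,'' the paper handles the same point more concretely: it transports only the \emph{pushed-off curve} $\widetilde{\gamma}_2$ through the homeomorphism $\phi$ (not the non-smooth vector field), reads off from $\phi(\widetilde{\gamma}_2)$ a normal framing class of $\gamma_1$, notes it coincides up to homotopy with $\mathrm{grad}(d_1)$ because both are Seifert framings, pushes both forward by the genuine smooth differential $d\Phi_1$ and performs the branch-sum there, and only applies $\psi$ at the very last step to transport a linking number — so both steps involving summing of tangent data are done with smooth maps. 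Your sketch would reach the same conclusion but requires making the quadrant/normal-crossing argument precise; both routes are viable, with the paper's being the more readily verifiable.
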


\begin{proof}
The topological equivalence of the germs means that there exist germs of  homeomorphisms $ \phi: (\C^2, 0) \to ( \C^2, 0)$ and $ \psi: (\C^3, 0) \to ( \C^3, 0)$ such that 
$\Phi_2=\psi \circ \Phi_1 \circ \phi$
holds. The double point curves $(D_1, 0)=(d_1^{-1}(0), 0)$ of $\Phi_1$ and $(D_2, 0)=(d_2^{-1}(0), 0)$ of $\Phi_2$ are topologically equivalent germs of curves, in fact, $D_1=\phi(D_2)$. 
Their links $\gamma_1$, $\gamma_2$ are of the same type as links in $\mathfrak{S}_1\cong \mathfrak{S}_2\cong  S^3$. 

Although the normal vector field $\mbox{grad} (d_2)$ along $\gamma_2$ cannot be pushed forward by $\phi$ since it is not necessarily differentiable, the slightly pushed out copy $\widetilde{\gamma_2}$ can be. 
The image $\phi(\widetilde{\gamma_2})$ determines a normal vector field denoted by $\phi_* (\mbox{grad} (d_2))$ along $\gamma_1$, which is homotopic to $\mbox{grad}(d_1)$ since both vector field represent the Seifert framing.
Hence the sum of the two copies of $d \Phi_1 (\mbox{grad}(d_1))$ and $d \Phi_1 (\phi_* (\mbox{grad} (d_2)))$ are homotopic normal fields along $\Upsilon_1$, thus the pushed out copies $\widetilde{\Upsilon}_1$ and $\widetilde{\Upsilon}_1^{(2)}$ of $\Upsilon_1$ along these vector fields are homotopic in $S^5 \setminus \Phi_1(\mathfrak{S}_1)=S^5 \setminus \Phi_1(\phi(\mathfrak{S}_2))$. 
Therefore, $\lk_{S^5} (\Phi_1(\mathfrak{S}_1), \widetilde{\Upsilon}_1)=\lk_{S^5} (\Phi_1(\mathfrak{S}_1), \widetilde{\Upsilon}_1^{(2)})$. Finally, applying $\psi$ to the whole configuration does not change the linking numbers, and $\psi (\Phi_1(\mathfrak{S}_1))=\Phi_2( \mathfrak{S}_2)$, $\psi(\Upsilon_1)=\Upsilon_2$, $\psi(\widetilde{\Upsilon}_1^{(2)})=\widetilde{\Upsilon}_2$.
\end{proof}

\begin{rem}
$L(f)$ can be defined for stable immersions $f: M^3 \looparrowright \R^5$ of closed oriented $3$-manifolds $M^3$, with trivial normal bundle, see \cite[Definition 2.5.]{saeki}. Especially $M^3$ can be a disjoint union of some copies of $S^3$. In this way for multigerms $\Phi=(\Phi_i): \sqcup (\C^2, 0)_i \to (\C^3, 0)$ the invariant $L(\Phi|_{M^3})$ is defined, where $M^3= \sqcup \mathfrak{S}_i$ with $\mathfrak{S}_i=\Phi_i^{-1}(S^5_{\epsilon})$. We will use this extension of $L$ for ordinary triple values.
\end{rem}

\begin{rem}
Recall Remark 2.2.7 from \cite{gtezis}. $L$ can be defined also for nonstable immersions which do not have triple
values, by the following argument. Any immersion $f$ admits a small perturbation by regular homotopy to a stable
immersion $\widetilde{f}$, and if $f$ does not have triple values, then any two stable perturbations can
be joined with a regular homotopy without stepping through a triple point. Thus $L(f)$ can
be defined as $L(
\widetilde{f})$ of any small stable perturbation $\widetilde{f}$ of $f$.

Consequently $L(\Phi|_{\mathfrak{S}})$ can be defined not only for finitely determined germs but for germs with finite $C$ and $T$, since for these germs $\Phi|_{\mathfrak{S}}$ is not a stable immersion, but it does not have triple points.  Moreover the equation \eqref{eq:main} holds for these germs too, since the proof uses an analytic stabilization of $\Phi$, not $\Phi$ itself. See also Corollary 3.6.3., Remark 3.6.4. in  \cite{gtezis} or \cite{NP}. An interesting example is $\Phi(s, t)=(s^2, t^2, st)$, which is the double cover of the $A_1$ singularity. See Subsection 3.7.2. of \cite{gtezis}.

However it is not clear for these germs, how can $L(\Phi|_{\mathfrak{S}})$ be computed directly from the topology of $\Phi$, without stabilizing it.

\end{rem}

\section{Main theorem}\label{s:prmain}

\subsection{Proof of the main theorem}
\begin{thm} \label{thm:c-3t}
For a finitely determined holomorphic germ $\Phi: ( \C^2, 0) \to ( \C^3, 0)$   $$L( \Phi|_{ \mathfrak{S}})=C( \Phi)-3T( \Phi). $$
\end{thm}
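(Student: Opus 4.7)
The plan is to interpret $L(\Phi|_{\mathfrak{S}}) = L_1(\Phi|_{\mathfrak{S}})$ as an intersection number in the ball $B^6$ via Definition~\ref{def:linking}, using the natural membranes $X = \im(\Phi)$ and the shifted copy $\widetilde{\Sigma}$ of the reduced singular locus $\Sigma$, pushed along the normal field described in Subsection~\ref{ss:eszfindet}. As the excerpt already notes, $X \cap \widetilde{\Sigma}$ consists only of the origin, which is a singular point of both membranes, so the intersection number cannot be read off directly. To get around this I would replace $\Phi$ by an analytic stabilization $\Phi_t$ and repeat the construction: let $X_t = \im(\Phi_t)$, let $\Sigma_t$ be the double value surface of $\Phi_t$, and let $\widetilde{\Sigma_t}$ be its shift along the analogously defined normal field. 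A homotopy argument (the membranes deform inside $B^6$ with boundaries $\Phi_t(\mathfrak{S})$ and $\widetilde{\Upsilon_t}$ agreeing on $\partial B^6$ with the original ones up to a small regular homotopy) then yields
\[
L(\Phi|_{\mathfrak{S}}) \;=\; \mathrm{int}_{B^6}\bigl(X_t,\ \widetilde{\Sigma_t}\bigr).
\]

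Next I would localize the intersection. By construction, the normal field used to produce $\widetilde{\Sigma_t}$ from $\Sigma_t$ is nowhere tangent to $X_t$ along $\Sigma_t$ away from cross caps and triple values. Consequently every point of $X_t \cap \widetilde{\Sigma_t}$ lies in an arbitrarily small neighbourhood of one of the $C(\Phi)$ cross cap points or one of the $T(\Phi)$ triple values of $\Phi_t$. Because the intersection number, the double point structure and the chosen normal field all respect analytic $\mathcal{A}$-equivalence, it suffices to compute a single local contribution at each of the two singularity types, using the normal forms~\eqref{eq:wh} and~\eqref{eq:tr}.

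The heart of the argument is then two local calculations. At the Whitney umbrella $(s,t) \mapsto (s^2, st, t)$ the double point curve $D$ is $\{s=0\}$ with involution $t \mapsto -t$; the conjugate gradient $\overline{\mbox{grad}(d)}$ is a real multiple of $\partial_s$, and its doubled pushforward at $(0,t)$ and $(0,-t)$ gives a well-defined normal vector along $\Sigma_t = \{(0,0,z)\}$ which extends to the umbrella with value $0$. Shifting $\Sigma_t$ slightly along this field produces a curve meeting $X_t$ transversely at exactly one point near the umbrella, and a direct orientation check using the complex structure on $X_t$ assigns this intersection the sign $+1$, contributing $+C(\Phi)$ in total. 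For an ordinary triple value in the form~\eqref{eq:tr} the three double point lines (the coordinate axes) each get pushed to a nearby line that meets the opposite coordinate plane branch of $X_t$ exactly once near the origin. An analogous orientation analysis assigns each of these three intersections the sign $-1$, yielding $-3T(\Phi)$.

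The main obstacle I foresee is the orientation bookkeeping at the cross cap. The complex orientation of $X_t$ is fixed, but the orientation of $\widetilde{\Sigma_t}$ is induced from the homotopically unique Seifert framing of $\gamma$, and it is the conjugation in $\overline{\mbox{grad}(d)}$ (rather than $\mbox{grad}(d)$ itself) that makes this vector field represent the Seifert framing. Getting the sign $+1$ at the umbrella, and not $-1$, is exactly the delicate point distinguishing $L_1$ from $L_2$ in Proposition~\ref{pr:Leq}, and it must be tracked scrupulously in the local coordinates in order to land on $C(\Phi) - 3T(\Phi)$ rather than its opposite. Summing the signed contributions over all cross caps and triple values then yields the theorem.
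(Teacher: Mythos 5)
Your overall strategy coincides with the paper's: pass to an analytic stabilization, interpret $L = L_1$ as the intersection number in $B^6$ of the two membranes $\Phi_\lambda(\mathfrak{B}_\lambda)$ and the shifted copy $\widetilde{\Sigma}_\lambda$ of the double-value surface, localize the intersection to cross caps and triple values, and evaluate the signed contributions by normal forms. However, both local calculations as you state them are incorrect, and only by luck do they land on the right totals.

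At the Whitney umbrella $\Phi(s,t)=(s^2,st,t)$, the double point curve is $D=\{t=0\}$ with involution $s\mapsto -s$, not $D=\{s=0\}$ with $t\mapsto -t$: indeed $\Phi(0,t)=(0,0,t)$ is injective along $\{s=0\}$, so $(0,t)$ and $(0,-t)$ are not identified. With your (wrong) choice, the doubled pushforward vector you describe is $\d\Phi(0,t)(1,0)+\d\Phi(0,-t)(1,0)=(0,t,0)+(0,-t,0)=0$ at \emph{every} point, not just the origin, so it does not define a pushout at all. The correct picture is $\Sigma=\{y=z=0\}$, preimages $(\pm\sqrt{x},0)$ of $(x,0,0)$, conjugate gradient $\overline{\mbox{grad}(t)}=(0,1)$, pushforwards $(0,\pm\sqrt{x},1)$, sum $(0,0,2)$, shifted curve $\widetilde{\Sigma}=\{(x,0,2\delta)\}$, which meets $X=\{xz^2=y^2\}$ transversely at the single point $(0,0,2\delta)$ with sign $+1$ (both membranes carry the complex orientation). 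Your discussion of the sign at the umbrella cannot be salvaged without this correction.

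At the triple value, $\widetilde{\Sigma}$ has three components $\widetilde{\Sigma}_\alpha$ and $X$ has three branches $X_\beta$, and \emph{all nine} pairs $(\widetilde{\Sigma}_\alpha, X_\beta)$ meet transversely at the origin (nine separate points after a small perturbation), not three. The diagonal pairs $\alpha=\beta$ contribute $+1$ each (the intersection is holomorphic in the relevant coordinate), while the six off-diagonal pairs contribute $-1$ each (the shifting coordinates are antiholomorphic), so the total is $3-6=-3$. Your claim of exactly three intersections, each of sign $-1$, misdescribes the geometry even though the sum is coincidentally correct; in fact the three intersections you seem to have in mind — each shifted axis meeting the ``opposite'' coordinate plane — are precisely the ones of sign $+1$. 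Finally, the appeal to $\mathcal{A}$-equivalence does not by itself let you splice the local membranes into a global one: the paper carefully builds collars $N_i\subset\partial U_i$ with zero intersection number joining $\widetilde{\Sigma}_\lambda\cap\partial U_i$ to the standard local membrane, and this step should not be omitted.
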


\begin{proof}
Let $ \Phi_\lambda: \mathfrak{B}_{\lambda} \to B^6_{\epsilon} $ be an analytic stabilization of 
$ \Phi_0=\Phi $.
Here, $ \mathfrak{B}_{\lambda} = \Phi_{\lambda}^{-1} (B^6_{\epsilon}) $ 
with boundary $ \partial \mathfrak{B}_\lambda = \mathfrak{S}_\lambda = \Phi_\lambda^{-1} (S^5_\epsilon) $.

Decreasing $ \lambda $ to $0$ induces a diffeomorphism $ \mathfrak{B}_{\lambda} \simeq \mathfrak{B}$, respectively $\mathfrak{S}_{\lambda} \simeq \mathfrak{S}$, and a regular homotopy through stable immersions between $ \Phi_{\lambda}|_{\mathfrak{S}_{\lambda}} $ and $ \Phi|_{\mathfrak{S}} $. It implies -- as recognised in \cite[Section 9]{NP} -- that
\begin{equation} \label{eq:stab}
L(\Phi|_{\mathfrak{S}}) = L( \Phi_{\lambda}|_{\mathfrak{S}_{\lambda}}).
\end{equation}
We denote the corresponding double point sets respectively by $D_\lambda$, $ \Sigma_\lambda $, $\gamma_\lambda$, $ \Upsilon_\lambda $, as defined in Subsection \ref{ss:eszfindet} . The reduced equation of $D_{\lambda}$ is $d_{\lambda}: \mathfrak{B}_{\lambda} \to \mathbb{C}$.

We are going to count $L(\Phi_\lambda)=L_1(\Phi_\lambda)$. According to definitions \ref{eq:L1} and \ref{def:linking}, we want to construct membranes bounding $\Phi_\lambda(\mathfrak{S}_\lambda)$ and $\widetilde{\Upsilon}_\lambda$, and count their intersection number. The first membrane is simply the whole image $\Phi_\lambda (\mathfrak{B}_\lambda)$.

For the second membrane, consider the normal vector field $w=\overline{\mbox{grad}(d_{\lambda})}$ on $D_\lambda \subset \mathfrak{B_\lambda}$, its restriction represents the Seifert framing on $\gamma_\lambda \subset \mathfrak{S}_\lambda$.
The pushforward $\d \Phi_\lambda (w)$ gives a double valued vector field at each point of $\Sigma_\lambda$. We add up the two vectors pointwise and pushout $\Sigma_\lambda$ slightly along the obtained vector field $v$ to get $\widetilde{\Sigma}_\lambda$.
(Notice that at triple points the vector field $v$ has three values, but they are all zeroes.)

By the  construction in Subsection \ref{ss:eszfindet}, the boundary is $ \partial \widetilde{\Sigma}_\lambda= \widetilde{\Upsilon}_\lambda$ and
\begin{equation} \label{eq:int}
  L( \Phi_\lambda|_{\mathfrak{S}_{\lambda}}) = \mathrm{int} (\Phi_\lambda(\mathfrak{B}_\lambda), \widetilde{\Sigma}_\lambda).
\end{equation}

As the two components of $v$ are tangent to the two branches of the image at a double point, the pushout $\widetilde{\Sigma}_\lambda$ has no intersection point with the whole image near an ordinary double point.

Besides double points, the only two types of singular points that may occur in the stabilized map $\Phi_\lambda$ are Whitney umbrella points and triple points. With the above remark, it means that we only have to count the intersection points near these points.

Umbrella points and triple points are  left-right equivalent to the standard copies of them, see (\ref{eq:wh}) and (\ref{eq:tr}). In the following two lemmas, we calculate the intersection numbers for these normal forms -- which are, in fact, the Ekholm--Sz\H{u}cs invariants of these (multi)-germs. After stating the lemmas we will deduce the global invariant by gluing these pieces together to complete the proof.

\begin{lemma} \label{le:whl1}
The Ekholm--Sz\H{u}cs invariant of the standard Whitney umbrella
  $$ \Phi (s,t)=(s^2,st,t)$$
is
  $$ L(\Phi|_\mathfrak{S})=1$$
where $\mathfrak{S}=\Phi^{-1}(S^5_\epsilon)$.
\end{lemma}

\begin{lemma} \label{le:trl1}
The standard triple value is the regular intersection of three branches. We parametrize it the following way
  $$ \Phi
    \left\{
    \begin{array}{ccc}
     \Phi_1 : (s_1,t_1)    & \mapsto & (0,s_1,t_1)  \\
     \Phi_2 : (s_2,t_2)    & \mapsto & (t_2,0,s_2)  \\
     \Phi_3 : (s_3,t_3)    & \mapsto & (s_3,t_3,0)  \\
    \end{array}
    \right.$$
(The pairs $(s_i,t_i)$ are local coordinates around the three preimages of the triple point.)
The Ekholm--Sz\H{u}cs invariant of this multigerm is
  $$ L(\Phi|_\mathfrak{S})=-3.$$
\end{lemma}

The above results suggest that each umbrella point and triple point contributes $1$ and respectively $-3$ to the global Ekholm--Sz\H{u}cs invariant. This is, in fact, the case and the brief argument is the following. The (multi)germs at the umbrella and triple points of $\Phi_{\lambda}$ are left-right equivalent of their standard form, hence, by the left-right invariance of $L$ (see \ref{prop:topinv}), the membrane of $\Phi_\lambda$ shall be replaced locally by the one coming from the standard forms.

More precisely, let us take an umbrella point or a triple value $p_i$ in $\C^3$ and take a small balls $U_i \subset \mathbb{C}^3$ around $p_i$ and $V_i \subset \mathbb{C}^2$ around $\Phi_{\lambda}^{-1}(p_i)$ and biholomorphisms $\phi_i: (U_i,p_i) \to (\mathbb{C}^3,0)$ and $\psi_i: (\mathbb{C}^2_r, \underline{0}_r) \to (V_i, \Phi_{\lambda}^{-1}(p_i))$ so that 
\[\phi_i \circ \Phi_\lambda \circ \psi_i: (\mathbb{C}^2_r, \underline{0}_r) \to (\mathbb{C}^3,0) \]
is a standard umbrella (respectively triple point) at $\underline{0}_r$. (Here we use the notation for multi germs: $(\mathbb{C}^2_r, \underline{0}_r)= \bigsqcup_{i=1}^r (\mathbb{C}^2, 0)$ with $r=1$ for a Whitney umbrella point and $r=3$ for a triple value $p_i$.)

We pull back the two membranes of the standard Whitney umbrella (resp. triple value) via $\phi_i$ to define new membranes inside $U_i$. On one hand we obtain another pushout of $\Sigma_{\lambda}$ instead of $\widetilde{\Sigma}_{\lambda}$,
let us denote it by $M_i \subset U_i$. On the other hand, we get back a piece of the other original membrane, $\Phi_{\lambda} (\mathfrak{B}_{\lambda}) \cap U_i$.

Taking a look at the boundary of $U_i$, we find that both $\widetilde{\Sigma}_\lambda \cap \partial U_i$ and $M_i \cap \partial U_i$ have the same linking number with $\Phi_\lambda (\mathfrak{B}_\lambda) \cap \partial U_i$: that is the $L_1$ invariant of the umbrella point or the triple point. Therefore we can construct a collar $N_i$ that connects $\widetilde{\Sigma}_\lambda \cap \partial U_i$ and $M_i \cap \partial U_i$ in $\partial U_i$, in a way that $N_i$ has an intersection number 0 with $\Phi_\lambda (\mathfrak{B}_\lambda) \cap \partial U_i$.

Gluing all these pieces together, we obtain a membrane replacing $\widetilde{\Sigma}_\lambda$:
\begin{equation}
\mathcal{M}=(\widetilde{\Sigma}_{\lambda} \setminus \bigcup_i U_i ) \cup \bigcup_i (N_i \cup M_i).
\end{equation}

%Lemma \ref{le:whl1} states that a Whitney point's contribution to the intersection number is $1$. Similarly, Lemma \ref{le:trl1} shows that the intersection number at a triple point is $-3$. Thus 
The intersection number
$\mathrm{int} (\Phi_\lambda(\mathfrak{B}_\lambda), M_i)$ equals $1$ for an umbrella point and $-3$ for a triple value,
$\mathrm{int} (\Phi_{\lambda}(\mathfrak{B}_\lambda),N_i)=0$ and $\mathrm{int} (\Phi_{\lambda}(\mathfrak{B}_\lambda),(\widetilde{\Sigma}_{\lambda} \setminus \bigcup_i U_i ))=0$, hence

\begin{equation}
L( \Phi|_{ \mathfrak{S}})= \mathrm{int} (\Phi_{\lambda}(\mathfrak{B}_\lambda), \mathcal{M})=
C( \Phi)-3T( \Phi).
\end{equation}

\end{proof}

\subsection{Proofs of the lemmas}

\begin{proof}[Proof of Lemma \ref{le:whl1}] Consider the standard Whitney umbrella $\Phi (s,t)=(s^2,st,t)$.
The closure of the set of double values of $\Phi$ is
  $$\Sigma=\{y=z=0\}=\{ (x,0,0): x\in \C \}.$$
This is the image of the double point curve $D=\{t=0\}=\{(s,0): s\in \C\}$. The link of $D$ is $\gamma$ and $\Phi (\gamma)=\Upsilon$.
We compute the linking number
$\lk_{ S^5 } ( \widetilde{\Upsilon}, \Phi(\mathfrak{S}) )$
by defining membranes bounded by $\widetilde{\Upsilon}$ and  $\Phi(\mathfrak{S})$ and taking their intersection multiplicity.

Let the membrane of $\widetilde{\Upsilon}$ be the shifted copy of the curve of double values $\widetilde{\Sigma}$. More precisely, we push $\Sigma$ out from $X=\mbox{im}(\Phi)$ along the pushforward $\d\Phi(v)$ of the vector field $v(s,0)= \overline{\mbox{grad}(t)}(s, 0)=(0,1)$ that is normal to $D$. The differential of our germ is
  $$\d\Phi (s,t)= \begin{pmatrix} 2s&0\\t&s\\0&1 \end{pmatrix}$$
making the pushforward of the normal vector field
  $$\d\Phi (v(s,0)) = \begin{pmatrix} 2s&0\\0&s\\0&1 \end{pmatrix} \cdot
  \begin{pmatrix}
  0 \\ 1
  \end{pmatrix}=
  \begin{pmatrix}
  0 \\ s \\ 1
  \end{pmatrix}. $$

At any double point $(x,0,0)\in \Sigma$, we have two preimages
\[\Phi^{-1} \{(x,0,0)\}=\{ (\sqrt{x},0),(-\sqrt{x},0) \}. \] 
The pushforward of the normal vectors at these points are $\d \Phi (v(\pm \sqrt{x},0))=(0,\pm \sqrt{x},1)$, hence the sum of the two vectors provides the vector field 
$$ w(x, 0, 0) = 
  \begin{pmatrix}
  0 \\ \sqrt{x} \\ 1
  \end{pmatrix} 
  +
    \begin{pmatrix}
  0 \\ -\sqrt{x} \\ 1
  \end{pmatrix}=
    \begin{pmatrix}
  0 \\ 0 \\ 2
  \end{pmatrix}
  $$
along $\Sigma\setminus \{0 \}$. The vector field $w$ can be extended continuously to the origin as it is constant.
Therefore, when we push the double point out by $w$, we obtain
$(x,0,0)+\delta w(x, 0, 0)= (x,0,2\delta) $ for a some $\delta \ll \epsilon $.

Thus the resulting membrane is
 $$\widetilde{\Sigma}=\{(x,0,2 \delta): x\in \C\} \cap B_\varepsilon.$$

On the other hand, let the membrane of $\Phi(\mathfrak{S})$ be simply the image of the ball $\Phi(\mathfrak{B})=X \cap B_{\epsilon}$. That is
  $$\{(x,y,z): xz^2=y^2 \} \cap B_\varepsilon.$$
  
The two membranes $\widetilde{\Sigma}$ and $X \cap \mathfrak{B}$  intersect transversely at $(0,0,2\delta)$. The sign of the intersection is positive as the two membranes have the complex orientations.
\end{proof}

\begin{figure}[h]

\centering
\includegraphics[width=0.9\textwidth]{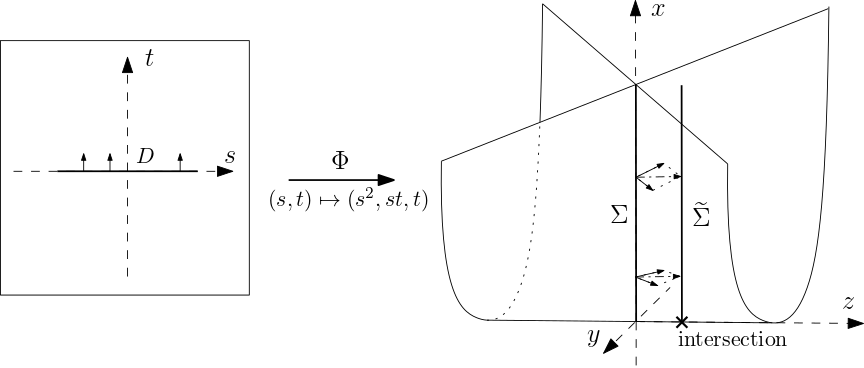}
\caption{Pushing out $\Sigma$ with the sum of the two pushforward vector fields.}
\label{fig:whitney}
\end{figure}

\begin{proof}[Proof of Lemma \ref{le:trl1}] Consider the standard triple value
 $$ \Phi
    \left\{
    \begin{array}{ccc}
     \Phi_1 : (s_1,t_1)    & \mapsto & (0,s_1,t_1)  \\
     \Phi_2 : (s_2,t_2)    & \mapsto & (t_2,0,s_2)  \\
     \Phi_3 : (s_3,t_3)    & \mapsto & (s_3,t_3,0)  \\
    \end{array}
    \right.$$
In this case, the set of double values is
  $$\Sigma=\{(x,0,0)\} \cup \{(0,y,0)\} \cup \{(0,0,z)\}$$
with $x,y,z \in \C$. The curve $\Sigma$ has three components meeting at the origin.
Also, $\Sigma$ has preimages in each two-dimensional chart:
  $$D_i = \{(s_i,0)\} \cup \{(0,t_i)\} = \{s_it_i=0\}$$
for $i\in\{1,2,3\}$.

The membrane we pull over $X \cap S^5$ is again the whole of the image $X\cap B^6$. Note that $X \cap S^5$ is diffeomorphic to the disjoint union of three copies of $S^3$. Thus the membrane consists of three components $X_x=\{x=0\} \cap B, X_y=\{y=0\} \cap B$, and  $X_z=\{z=0\} \cap B$, meeting at the origin.

Now, we describe the membrane for $\widetilde{\Upsilon}$. Let us see what happens if we push out the double values using the sum of the normal vector fields in the preimage -- as before.

The normal vector fields corresponding to $D_i=\{s_it_i=0\}$ are $v_i(s_i,t_i)=\overline{\mbox{grad}(s_it_i)}=(\overline{t_i},\overline{s_i})$.
The differentials of the three map germs are
  $$\d \Phi_1 (s_1,t_1)= \begin{pmatrix} 0 & 0 \\ 1&0 \\ 0&1 \end{pmatrix}, \quad
  \d \Phi_2 (s_2,t_2)= \begin{pmatrix} 0&1 \\ 0&0 \\ 1&0 \end{pmatrix}, \quad
  \d \Phi_3 (s_3,t_3)= \begin{pmatrix} 1 & 0 \\ 0&1 \\ 0&0 \end{pmatrix}.$$
Let us show how our construction works on one component of $\Sigma$. We denote $X_y \cap X_z=\{ (x,0,0): x\in \C \}$ by $\Sigma_x$. A point of $\Sigma_x$ has, again, two preimages: $\Phi_2^{-1}(x,0,0)=(0,x)\in \C_{s_2,t_2}$ and $\Phi_3^{-1}(x,0,0)=(x,0)\in \C_{s_3,t_3}$.
The corresponding normal vectors are $v_2(0,x)=(\overline{x},0)$ and $v_3(x,0)=(0,\overline{x})$.
When we push these vectors forward with the respective differentials, we obtain
  $$\d \Phi_2 (v_2(0,x))= \begin{pmatrix} 0&1 \\ 0&0 \\ 1&0 \end{pmatrix} \cdot
  \begin{pmatrix}
  \overline{x} \\ 0
  \end{pmatrix}
  =
  \begin{pmatrix}
  0 \\ 0 \\ \overline{x}
  \end{pmatrix}
  $$
and similarly $\d \Phi_3 (v_3(0,x))=(0,\overline{x},0)^T$. Hence, by pushing the initial point $(x,0,0)$ out with the sum of these, we reach
  $(x,0,0)+ \delta (0,0,\overline{x})+\delta (0,\overline{x},0)=(x,\delta\overline{x},\delta\overline{x}) \in \widetilde{\Sigma}_x$.
  
Because of the cyclic symmetry of the presentation, the other two components behave similarly, resulting in the membrane
  $$\widetilde{\Sigma}= \{(x,\delta \overline{x}, \delta \overline{x})\} \cup \{(\delta \overline{y},y,\delta \overline{y})\} \cup \{(\delta \overline{z},\delta \overline{z},z)\} = : \widetilde{\Sigma}_x \cup \widetilde{\Sigma}_y \cup \widetilde{\Sigma}_z$$
for some $x,y,z \in \C$ with $\widetilde{\Sigma}$ being in $B$.
One problem with this membrane is that each vector field vanishes at the origin hence in the end we have not moved the point of $\Sigma$ at the origin. Thus $\widetilde{\Sigma}$ meets $X$ only at the origin but with some multiplicity that is somewhat difficult to count.
Fortunately, each pair of components $(X_\alpha,\widetilde{\Sigma}_\beta)$ intersect transversally. We only need to compute the sign of each such intersection and sum them up.

Take $\widetilde{\Sigma}_x=\{(x,\delta \overline{x}, \delta \overline{x})\}$ first. It intersects $X_x=\{x=0\}$ with positive sign, and the other two with negative -- as the corresponding coordinate functions are antiholomorphic. The membranes $\widetilde{\Sigma}_y$ and $\widetilde{\Sigma}_z$ behave similarly. We can summerize this in the formula
  $$\mathrm{int}_0 \big( \widetilde{\Sigma}_\alpha, X_\beta \big)=
  \left\{ \begin{array}{ll}
        +1 & \text{if } \alpha=\beta \\
        -1 & \text{if } \alpha\neq \beta.
  \end{array} \right.$$

Therefore the total intersection number is
  $$\mathrm{int}_0 \big( \widetilde{\Sigma}, X\big)= \sum_{\alpha,\beta \in \{x,y,z\}} \mathrm{int}_0 \big( \widetilde{\Sigma}_\alpha, X_\beta \big)=3\cdot 1 + 6 \cdot (-1)=-3.$$
\end{proof}

\begin{rem}
Note that we could also move the components of $\widetilde{\Sigma}$ away from the origin in order to see the nine points of intersection apart. A perturbation of the form
$$\Sigma'= \{(x-\varepsilon_1,\delta (\overline{x-\varepsilon_2}), \delta (\overline{x-\varepsilon_3}))\} \cup ...$$
with $|\varepsilon_i| \ll \varepsilon$ would do so. In turn, these modifications would not change the topology of the membrane on the boundary of $B$.
\end{rem}

\section{Final remark about future plans} This article has a continuation in progress, in which the relation of $L(f)$  with the surgery coefficients of the Milnor fiber boundary of $(\im (\Phi), 0) \subset (\C^3, 0)$ -- described in \cite{NP2} -- will be explained. In that paper we will also provide a new direct proof for part (c) of Proposition \ref{pr:Leq}.

\appendix

\section{Outline of related results}\label{se:elojel}

The aim of this section is to clarify the role of $L$  and another related linking invariant in the study of generic $\mathcal{C}^{\infty}$ real maps and immersions, and clear up the context of our result. We also clear up some sensitive sign ambiguities related to the Ekholm--Sz\H{u}cs Smale invariant formula.

The other linking invariant $l$ is defined for real generic maps. While $L$ measures the linking of the double values of an immersion with the image of it, $l$ measures the linking of the set of singular points in the target of a  generic map with the image of the map.

The Ekholm--Sz\H{u}cs formula for the Smale invariant of an immersion uses both linking invariant, $L$ of the immersion and $l$ of a singular Seifert surface of the immersion. The original proof of our main formula \eqref{eq:main} is based on the Ekholm--Sz\H{u}cs Smale invariant formula and the `holomorphic Smale invariant formula' of N\'{e}methi and the first author.

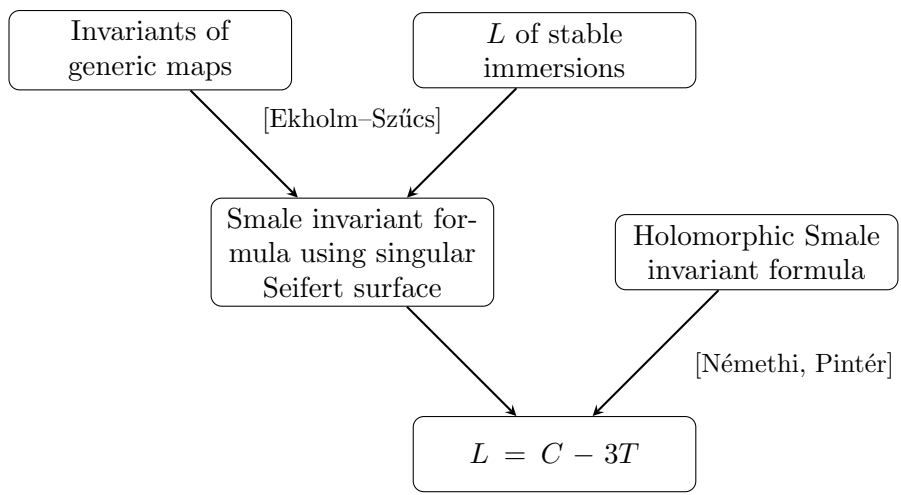
\begin{figure}[h]
 
\centering

\begin{tikzpicture}[node distance=3.8cm]
\node (gen) [block] {Invariants of generic maps};
\node (smale) [block, below right of=gen] {Smale invariant formula using singular Seifert surface};
\node (L) [block, above right of=smale] {$L$ of stable immersions};
\node (eq) [block, below right of=smale] {$L=C-3T$};
\node (holo) [block, above right of=eq] {Holomorphic Smale invariant formula};

\node (ESz) [above of=smale, yshift=-2.05cm] {\small $[$Ekholm--Sz\H{u}cs$]$};
\node (NP) [above right of=eq, yshift=-1.5cm, xshift=0.5cm] {\small $[$N\'emethi, Pint\'er$]$};

\draw [arrow] (gen) -- (smale);
\draw [arrow] (L) -- (smale);
\draw [arrow] (smale) -- (eq);
\draw [arrow] (holo) -- (eq);
\end{tikzpicture}

\caption{Mindmap for the original proof of \eqref{eq:main}.}
\label{fig:gondterkep}
\end{figure}

\subsection{A linking invariant of real generic maps} The $\mathbb{Z}_2$ or integer valued invariant $l(f)$ is defined for real generic maps $f: M^{2k} \to \mathbb{R}^{3k}$ of  closed smooth manifolds $M^{2k}$. It measures the linking of a pushout copy of the singular values with the image of the map as follows. (See, for reference, \cite{trip, ESz, saeki}.)

Such a map $f$ has  (1) regular simple points, (2) double values with transverse
intersection of the regular branches, (3) triple values with regular intersection of the regular
branches and (4) singular values. The dimension of the set of double values is $k$, and the triple values are isolated.
The set of singular values is a $k-1$ dimensional family of generalized real Whitney umbrella points, whose local form is
\begin{equation}
    f_{\mbox{wh}}: (\mathbb{R} \times \mathbb{R}^{k}, 0) \to (\mathbb{R} \times \mathbb{R}^{k} \times \mathbb{R}^{k}, 0),  
\end{equation}
\begin{equation}
    f_{\mbox{wh}}(s, \underline{t})=(s^2, s\underline{t}, \underline{t}).
\end{equation}

The closure $\Delta (f)$ of the set of double values of $f$ is an immersed manifold with boundary. $ \Delta (f)$ has triple self intersection at the triple values of $f$ and the boundary of $\Delta (f)$ is the set of the Whitney umbrella points (singular values) $\Sigma(f) =\partial \Delta (f)$.

The invariant $l(f)$ is defined as the linking number
\begin{equation}
    l(f)=\lk_{\mathbb{R}^{3k}}( \Sigma'(f), f(M^{3k}))
\end{equation}
of the copy $\Sigma'(f)$ of $\Sigma(f)$ shifted slightly along the outward normal field of $\Sigma(f) \subset \Delta (f)$ and the image $f(M^{3k})$ of $f$.  

In general $l(f)$ and the number of triple values is defined only modulo 2 -- because the lack of orientation  on $\Delta (f)$ -- and these $\mathbb{Z}_2$ versions are denoted by $l_2(f)$ and $t_2(f)$ respectively. If $k$ is even and $M^{2k}$ is oriented, then $l(f)$ is a well defined as an integer, \cite{trip}. In these cases each triple value can be given a sign, and the sum of these signs is the integer $t(f)$. 

Ekholm and Sz\H{u}cs expressed some characteristic numbers of $M^{2k}$ in terms of $l$ and $t$. Namely, in \cite{trip} they proved the equality
\begin{equation}\label{eq:z2ek}
    l_2(f)+t_2(f)=\overline{w}^2_k[M] + \overline{w}_{k-1} \overline{w}_{k+1} [M]
\end{equation}
in $\mathbb{Z}_2$, where the terms on the right hand side are products of the normal Stiefel-Whitney classes of $M^{2k}$ evaluated on the fundamental class $[M]$ of $M^{2k}$. 

For $k=2n$ and $M^{2k}$ oriented, the equation of integers
\begin{equation}\label{eq:intek}
    3t(f) -3l(f)=\overline{p}_n [M]
\end{equation}
is proved in \cite{ESz}, where $\overline{p}_n [M]$ is the $n$-th normal Pontryagin number of $M^{4n}$. 
By using Hirzebruch signature theorem, for $k=2$ one can rewrite the formula \eqref{eq:intek} as
\begin{equation}\label{eq:intek4}
    l(f)-t(f)= \sigma(M^4),
\end{equation}
where $ \sigma(M^4)$ is the signature of $M^4$, see \cite{ESz}.

The proofs of these formulas use methods similar to that of our proof. Namely, each of them considers a set of certain type singularities of a map, and deals with the pushout copy of it along a suitably defined normal vector field, then counts the intersection point, see for example \cite[Lemma 3]{ESz}. 
When proving the formula \eqref{eq:z2ek} in \cite[Theorem 1]{trip}, the set of double values of the map $f: M^{2k} \to \mathbb{R}^{3k}$ is shifted slightly along a vector field, which is defined as the sum of the two vectors coming from a suitable normal vector field of the double point set in the source. At this rate it is even more similar to the method we use to prove equation~\eqref{eq:main}.

Despite the similarity in methods, none of the equations \eqref{eq:z2ek}, \eqref{eq:intek} and \eqref{eq:intek4} can be directly applied  for the setup of this article, that is for holomorphic stabilizations $\Phi_{\lambda}$ of holomorphic germs $\Phi: (\mathbb{C}^2, 0) \to (\mathbb{C}^3, 0)$, for the following reasons. 
First, the domain of $\Phi_{\lambda}$ is a 4-ball, which is not a closed manifold, moreover it is topologically trivial. 
Second, the stabilization $\Phi_{\lambda}$ is stable as a holomorphic map, but it is not stable (not generic) in the $\mathcal{C}^{\infty}$-sense, considered as a map from $\mathbb{R}^4$ to $\mathbb{R}^6$. 
Indeed, each isolated complex cross cap point can be further deformed to obtain a stable real $\mathcal{C}^{\infty}$ map with a circle of generalized real cross cap points, see \cite{NP}.
Also, in contrast to the real case, the complex cross cap points are not boundary points of the set of double values.
Third, one could try to relate the above results to the immersion on the boundary in our case. Then, however, the dimensions do not match and these immersions do not have triple points or singular points whatsoever. What is more, the Smale invariant formula \eqref{eq:eszsmale} hints that $t(f)$ and $l(f)$ should really be considered for the membranes and not the boundary.

\subsection{Smale invariant formulas}

If $M^4$ is an oriented 4-manifold with boundary, the `defect' of the equation \eqref{eq:intek4} provides information about the restriction of the map to the boundary. In the simplest case, the manifold $M^4$, with boundary $\partial M^4 $ diffeomorphic with $S^3$, is mapped to the upper half space $\mathbb{R}^6_+ = \{(x_1, \dots, x_6) \in \mathbb{R}^6 \ | \ x_6 > 0 \}$ with a generic map
\begin{equation}
\hat{f}: M^4 \to \mathbb{R}^6_+,
\end{equation}
whose restriction is assumed to be a stable immersion
\begin{equation}
f:=\hat{f}|_{\partial M^4}: S^3 \looparrowright \mathbb{R}^5.
\end{equation}
In this case $\hat{f}$ is referred as a singular Seifert surface of the immersion $f$.

Recall that the immersions of $S^3$ to $\mathbb{R}^5$ are classified up to regular homotopy by an integer valued invariant called Smale invariant and denoted by $\Omega$. 
That is, two immersions $f_1, f_2: S^3 \looparrowright \mathbb{R}^5$ are regular homotopic if and only if $\Omega(f_1)=\Omega(f_2)$, and for every integer $n \in \mathbb{Z}$ there is an immersion $g: S^3 \looparrowright \mathbb{R}^5$ with Smale invariant $\Omega(g)=n$. 
The Smale invariant can be constructed in many different ways, see for example \cite{smale, hughes-melvin, NP, gtezis}. Eventually the Smale invariant $\Omega(f)$ is constructed  as an element of the homotopy group $\pi_3 (SO(5))$, which is isomorphic with the infinite cyclic group $(\mathbb{Z}, +)$.

Then by \cite{ESz} the Smale invariant $\Omega(f)$ of a stable immersion $f: S^3 \looparrowright \mathbb{R}^5$ can be expressed with the invariants of a singular Seifert surface $\hat{f}: M^4 \to \mathbb{R}^6_+$ and $L$ as
\begin{equation}\label{eq:eszsmale}
\Omega(f)=\frac{1}{2} (3 \sigma (M^4)+3 t (\hat{f})-3 l (\hat{f})+ L(f)).
\end{equation}
Several variants and generalizations of the Ekholm--Sz\H{u}cs formula~\eqref{eq:eszsmale} appeared in the literature, see \cite{hughes-melvin, saeki, juhasz, ekholm-szucs-egzotikus} or the brief summary of these results in \cite[Ch.2]{gtezis}.

For immersions $\Phi|_{\mathfrak{S}}: \mathfrak{S} \cong S^3 \looparrowright S^5$ associated with finitely determined holomorphic germs $\Phi: (\mathbb{C}^2, 0) \to (\mathbb{C}^3, 0)$ N\'{e}methi and the first author \cite{NP} proved the `holomorphic Smale invariant formula'
\begin{equation}\label{eq:npsmale}
\Omega(\Phi|_{\mathfrak{S}}) =-C(\Phi).
\end{equation}
The proof of this formula is self-contained in the sense that it is independent of the above results. 

A singular Seifert surface for $\Phi|_{\mathfrak{S}}$ can be constructed from a holomorphic stabilization $\Phi_{\lambda}$ of $\Phi$ 
by a canonical $\mathcal{C}^{\infty}$ stabilization of the complex Whitney umbrella points. In this way the Ekholm--Sz\H{u}cs formula~\eqref{eq:eszsmale} can be applied. By comparing it with the equation \eqref{eq:npsmale} and using calculations on concrete examples, \cite{NP} proves the main theorem \eqref{eq:main} of this article, namely 
\begin{equation}
L(\Phi|_{\mathfrak{S}})=C(\Phi)-3 T(\Phi).
\end{equation}
The evolution of these results is summed up by Figure \ref{fig:gondterkep}.

However, the proof of each Smale invariant formula is rather complicated, and the identification of the signs of the terms are widely nontrivial (see the next paragraph).
Furthermore the correspondence \eqref{eq:main} becomes important in the proof of the topological invariance of $C$ and $T$.
This was the motivation to publish a new direct proof for \eqref{eq:main}, which does not use any of the above results -- although the techniques are similar to those ones used in their proofs.
An additional benefit of our proof is the simple identification of the sign: \eqref{eq:main} is sign correct with the $L_1$ version of $L$. This fact has further consequences for the singular Seifert surface formula \eqref{eq:eszsmale} as explained in the next paragraph.

\subsection{Remarks on sign and orientation}

This paragraph is a brief summary of the issues related to the signs in the Smale invariant formulas. We unravel an imprecision in \cite{ESz} and \cite{saeki}:  although the linking invariant $L$ is defined in these articles using the construction denoted by $L_2$ in Subsection~\ref{ss:eszinv}, the Smale invariant formula \eqref{eq:eszsmale} is satisfied by using $L=L_1$.

The Smale invariant does not have a canonical sign, since by default it is an element of the group $\pi_3(SO(5)) \cong (\mathbb{Z}, +)$. 
To identify this group with $\mathbb{Z}$, one has to fix a generator in $\pi_3(SO(5))$ and declare it to be $+1$. 
That was done in \cite{NP}, and the formula \eqref{eq:npsmale} is sign-correct with that fixed generator. In other words it is proved that the Smale invariant of the immersion associated with the complex Whitney umbrella is $-1$ times the fixed generator.

The formula \eqref{eq:eszsmale} is proved in \cite{ESz} without considering the sign of the Smale invariant.
More precisely, they proved that the right hand side of the formula is a complete regular homotopy invariant, therefore it must agree with the Smale invariant up to sign. Nevertheless it is shown in \cite{NP} that the foruma \eqref{eq:eszsmale} is correct with the fixed generator of $\pi_3(SO(5))$. 

However the sign of $L$ is not specified directly in \cite{NP, gtezis}. It is chosen to satisfy the Ekholm--Sz\H{u}cs formula \eqref{eq:eszsmale} with this choice. For example, the invariant $L(\Phi|_{\mathfrak{S}})$ of the complex Whitney umbrella $\Phi(s, t)=(s^2, st, t)$ is computed in \cite[10.1.]{NP} up to sign by using the `$L_2$' construction (see also \cite[3.7.1]{gtezis}), resulting $L(\Phi|_{\mathfrak{S}})=\pm 1$.  Using the sign convention adapted to the formula \eqref{eq:eszsmale}, $L(\Phi|_{\mathfrak{S}})$ of the complex Whitney umbrella is declared to be $+1$, and $L=C-3T$ is concluded with this sign convention. 

Now, from the proof of Lemma~\ref{le:whl1} it is clear that $L_1(\Phi|_{\mathfrak{S}})=+1$ for the Whitney umbrella, hence $L_2(\Phi|_{\mathfrak{S}})=-1$ by part (c) of Proposition~\ref{pr:Leq}. Therefore to make the Ekholm--Sz\H{u}cs Smale invariant formula  \eqref{eq:eszsmale}  correct, $L$ has to defined to be $L_1$, in contrast to the definitions given in \cite{ESz} and \cite{saeki}. Note that by changing the sign of $L$ in the formula not only the sign of the right hand side changes, but the absolute value changes as well. 

On the other hand,  in the calculation of $L_2(\Phi|_{\mathfrak{S}})$ of the complex Whitney umbrella  in \cite[10.1.]{NP} the sign of the intersection point can be determined directly. Both membranes ($\Phi(\mathfrak{B})$ and $H$ in \cite{NP} and \cite{gtezis}) has complex (but not holomorphic) parametrization. These parametrizations induce the correct orientations in the sense that the induced orientation on the boundary agrees with the original orientation of the boundary. A direct calculation of the determinant shows that the intersection point has negative sign. Hence $L_2(\Phi|_{\mathfrak{S}})=- 1$ can be discovered directly, which is equal to $-L_1(\Phi|_{\mathfrak{S}})$ according to Proposition \ref{pr:Leq}. 

%\begin{prop}\label{pr:l1l2}
%$L_2(f)=-L_1(f)$ holds for stable immersions $f: S^3 \looparrowright S^5$. \end{prop}

\begin{rem}
By default, the orientation induced on the boundary of an oriented manifold depends on a choice of a convention, called `boundary convention', for example `outward normal first'. 
Although, at first sight, the boundary convention seems to play a key role in the identification of the signs of the Smale invariant formulas and $L$, this is not the case.

The correct sign of the formulas \eqref{eq:eszsmale} and \eqref{eq:npsmale} are independent of the choice of the boundary convention. Briefly speaking its reason is that in the construction of the Smale invariant $S^3$ is considered as the boundary of the 4-ball in $\mathbb{R}^4$. By changing the boundary convention, the orientation of the boundary of the singular Seifert surface changes, as well as the orientation of $S^3=\partial B^4$ in the construction of the Smale invariant, but the value of the Smale invariant and the right hand side of the formulas remain the same. See \cite{NP} or \cite[Ch.3]{gtezis} for details.

The invariant $L$ of finitely determined holomorphic germs $\Phi$ is also independent of the choice of the boundary convention. Recall that $L(\Phi|_{\mathfrak{S}})$ is defined as the intersection number of two oriented `membranes' in $B^6$ whose boundaries are $\Phi(\mathfrak{S})$ and $\widetilde{\Upsilon}=\partial \widetilde{\Sigma}$ respectively.
Although $\Phi(\mathfrak{S})$ and $\widetilde{\Upsilon}=\partial \widetilde{\Sigma}$ are originally oriented as the boundaries of $\Phi (\mathfrak{B})$ and $\widetilde{\Sigma}$ after choosing a boundary convention, all in all, the correct orientations of the membranes do not depend on the choice of the boundary convention. Indeed, the correct orientation means that the membrane induces the same orientation on the boundary as the original membrane, whichever boundary convention is used. Cf. \cite{NP, gtezis}.
\end{rem}

\section*{Funding and competing interests}

\subsection*{Declarations}

GP has been supported by the National Research Development and Innovation Office (NKFIH) through the OTKA Grant FK 132146. 

AS was partially supported by the \'Elvonal (Frontier) grant KKP126683 of the NKFIH, and Central European University. 

The authors declare they have no financial interests.

\renewcommand{\refname}{References}

\end{document}